\DeclareMathOperator{\Max}{Max}
\DeclareMathOperator{\Min}{Min}
\newtheorem{theorem}{Theorem}[section]
\newtheorem{definition}[theorem]{Definition}
\newtheorem{lemma}[theorem]{Lemma}
\newtheorem{proposition}[theorem]{Proposition}
\newtheorem{remark}[theorem]{Remark}
\newtheorem{example}[theorem]{Example}
\title{Implication in finite posets with pseudocomplemented sections}
\author{Ivan~Chajda and Helmut~L\"anger}
\date{}
\begin{document}

\footnotetext{Support of the research by the Austrian Science Fund (FWF), project I~4579-N, and the Czech Science Foundation (GA\v CR), project 20-09869L, entitled ``The many facets of orthomodularity'', as well as by \"OAD, project CZ~02/2019, entitled ``Function algebras and ordered structures related to logic and data fusion'', and, concerning the first author, by IGA, project P\v rF~2021~030, is gratefully acknowledged.}

\maketitle

\begin{abstract}
It is well-known that relatively pseudocomplemented lattices can serve as an algebraic semantics of intuitionistic logic. To extend the concept of relative pseudocomplementation to non-distributive lattices, the first author introduced so-called sectionally pseudocomplemented lattices, i.e.\ lattices with top element $1$ where for every element $y$ the interval $[y,1]$, the so called  section, is pseudocomplemented. We extend this concept to posets with top element. Our goal is to show that such a poset can be considered as an algebraic semantics for a certain kind of more general intuitionistic logic provided an implication is introduced as shown in the paper. We prove some properties of such an implication. This implication is ``unsharp'' in the sense that the value for given entries need not be a unique element, but may be a subset of the poset in question. On the other hand, all of these values are as high as possible. We show that this implication even determines the poset and if a new operator $\odot$ is introduced in an ``unsharp'' way, such structure forms an ``unsharply'' residuated poset.
\end{abstract}

{\bf AMS Subject Classification:} 06A11, 06D15, 03B52, 03G10, 03B60

{\bf Keywords:} Poset, section, relative pseudocomplement, poset with pseudocomplemented sections, intuitionistic implication, unsharp implication, unsharp conjunction, unsharp residuation

\section{Introduction}

Relatively pseudocomplemented lattices, often called Heyting algebras (see e.g.\ \cite I and \cite{K80}) or Brouwerian lattices (see e.g.\ \cite{K81}), arise from intuitionistic logic and were first investigated by T.~Skolem about 1920, see also \cite F and \cite{Ba}. For a detailed development see e.g.\ \cite{Cu}. Within this context, the relative pseudocomplement $x*y$ of $x$ with respect to $y$ is usually considered as intuitionistic implication, see e.g.\ \cite N or \cite{Cu}.

Hence, in relatively pseudocomplemented lattices we define
\[
x\rightarrow y:= x*y.
\]
It is well-known that every finite pseudocomplemented lattice is distributive. To extend investigations in intuitionistic logic also to the non-distributive case, the first author introduced so-called {\em sectionally pseudocomplemented lattices}, see \cite{Ch} and \cite{CR}. These are lattices with a top element where for every element $y$ and every element $x$ in the interval (so-called {\em section}) $[y,1]$ there exists a pseudocomplement $x^y$ of $x$ with respect to $y$. Putting
\begin{equation}\label{equ1}
x\rightarrow y:=(x\vee y)^y
\end{equation}
the situation becomes formally analogous to the case of relatively pseudocomplemented lattices. For the typical case, consider the lattice depicted in Figure~1:

\vspace*{-2mm}

\begin{center}
\setlength{\unitlength}{7mm}
\begin{picture}(6,8)
\put(3,1){\circle*{.3}}
\put(5,3){\circle*{.3}}
\put(1,4){\circle*{.3}}
\put(5,5){\circle*{.3}}
\put(3,7){\circle*{.3}}
\put(3,1){\line(-2,3)2}
\put(3,1){\line(1,1)2}
\put(3,7){\line(-2,-3)2}
\put(3,7){\line(1,-1)2}
\put(5,3){\line(0,1)2}
\put(2.85,.25){$0$}
\put(5.4,2.85){$a$}
\put(.3,3.85){$b$}
\put(5.4,4.85){$c$}
\put(2.85,7.4){$1$}
\put(2.2,-.75){{\rm Fig.~1}}
\end{picture}
\end{center}

\vspace*{4mm}

It is evident that this lattice has pseudocomplemented sections, but the lattice is neither relatively pseudocomplemented (since the relative pseudocomplement of $c$ with respect to $a$ does not exist) nor distributive.

The operation tables for $x^y$ and $\rightarrow$ look as follows:
\[
\begin{array}{c|ccccc}
x^y & 0 & a & b & c & 1 \\
\hline
 0  & 1 & - & - & - & - \\
 a  & b & 1 & - & - & - \\
 b  & c & - & 1 & - & - \\
 c  & b & a & - & 1 & - \\
 1  & 0 & a & b & c & 1
\end{array}
\quad\quad\quad
\begin{array}{c|ccccc}
\rightarrow & 0 & a & b & c & 1 \\
\hline
      0     & 1 & 1 & 1 & 1 & 1 \\
			a     & c & 1 & b & 1 & 1 \\
			b     & c & a & 1 & c & 1 \\
			c     & b & a & b & 1 & 1 \\
			1     & 0 & a & b & c & 1.
\end{array}
\]
The notion of relatively pseudocomplemented lattices was extended to posets, see e.g.\ \cite{CLP}. It is useful when a reduct of intuitionistic logic is considered where one studies only the connective implication but not other connectives like disjunction or conjunction. Let us note that in intuitionistic logic, the connectives implication, conjunction and disjunction are independent.

\section{Posets with pseudocomplemented sections}

To extend our study also to (not necessarily relatively pseudocomplemented) posets with pseudocomplemented sections, let us introduce several necessary concepts.

Let $(P,\leq)$ be a poset $a,b\in P$ and $A,B\subseteq P$. We say $A<B$ if $x\leq y$ for all $x\in A$ and $y\in B$. Instead of $\{a\}<\{b\}$, $\{a\}<B$ and $A<\{b\}$ we simply write $a<b$, $a<B$ and $A<b$, respectively. Analogously we proceed with the relational symbols $\leq$, $>$ and $\geq$. Denote by
\[
L(A):=\{x\in P\mid x\leq A\}\text{ and }U(A):=\{x\in P\mid A\leq x\}
\]
the so-called {\em lower} and {\em upper cone} of $A$, respectively. Instead of $L(\{a\})$, $L(\{a,b\})$, $L(A\cup\{a\})$, $L(A\cup B)$ and $L\big(U(A)\big)$ we simply write $L(a)$, $L(a,b)$, $L(A,a)$, $L(A,B)$ and $LU(A)$, respectively. Analogously, we proceed in similar cases. Denote the set of all minimal and maximal elements of $A$ by $\Min A$ and $\Max A$, respectively.

Recall that a {\em pseudocomplemented poset} is an ordered quadruple $(P,\leq,{}^*,0)$ where $(P,\leq,0)$ is a poset with bottom element $0$ and $^*$ is a unary operation on $P$ such that for all $x\in P$, $x^*$ is the greatest element of $(P,\leq)$ satisfying $L(x,x^*)=0$. (Here and in the following, we often identify singletons with their unique element.) This means that $x\wedge x^*$ exists for each $x\in P$ and $x\wedge x^*=0$.

Let us mention that in every logic, both classical or non-classical, a prominent role plays the logical connective implication. The reason is that implication enables logical deduction, i.e.\ the derivation of new propositions from given ones. In order to study a logic based on a poset, one cannot expect that the result of implication will be uniquely determined. This means that the result of the implication $x\rightarrow y$ for given elements $x$ and $y$ of a given poset $P$ would be a subset of $P$, not necessarily a singleton. This is the reason why we will call such an implication ``unsharp''. On the other hand, we ask such an unsharp implication to satisfy the rules and properties usually satisfied by an implication and, moreover, the results of our implication should be as high as possible. We introduce such an unsharp implication within the next section. In Proposition~\ref{prop1} we show that our implication satisfies properties similar to those satisfied by the standard implication. We also show that the values of results of our implication are usually higher than those for implication of intuitionistic logic based on relative pseudocomplementation. In the last section we introduce also an unsharp connective conjunction which is connected with our implication via a certain kind of adjointness.

\begin{definition}
A {\em finite poset with pseudocomplemented sections} is an ordered quadruple $\big(P,\leq,(^y;y\in P),1\big)$ where $(P,\leq,1)$ is a finite poset with top element $1$ and for every $y\in P$, $([y,1],\leq,{}^y,y)$ is a pseudocomplemented poset. For every $y\in P$ and every subset $B$ of $[y,1]$ put $B^y:=\{b^y\mid b\in B\}$. Finally, for all $x,y\in P$ define the implication $x\rightarrow y$ as follows:
\[
x\rightarrow y:=\big(\Min U(x,y)\big)^y.
\]
A {\em finite poset with $0$ and pseudocomplemented sections} is an ordered quintuple $\big(P,\leq,(^y;y\in P),0,1\big)$ where $\big(P,\leq,(^y;y\in P),1\big)$ is a finite poset with pseudocomplemented sections and $0$ is the bottom element of $(P,\leq)$.
\end{definition}

Observe that because of $1\in U(x,y)$ we have $\Min U(x,y)\neq\emptyset$.

\begin{remark}\label{rem1}
If $\big(P,\leq,(^y;y\in P),1\big)$ is a finite poset with pseudocomplemented sections, $b\in P$ and $a\in[b,1]$ then
\[
a^b=\max\{x\in P\mid L(a,x)\cap[b,1]=b\}.
\]
\end{remark}

Hence, in general, $\rightarrow$ is not a binary operation on $P$ but an operator assigning to each element of $P^2$ a non-empty subset of $P$. The almost obvious relationship between the sectional pseudocomplementation and the operator $\rightarrow$ is as follows. 

\begin{lemma}\label{lem1}
Let $\big(P,\leq,(^y;y\in P),1\big)$ be a finite poset with pseudocomplemented sections and $a,b\in P$. Then the following hold:
\begin{enumerate}[{\rm(i)}]
\item If $a\vee b$ exists in $(P,\leq)$ then $a\rightarrow b=(a\vee b)^b$,
\item if $b\leq a$ then $a\rightarrow b=a^b$.
\end{enumerate}
\end{lemma}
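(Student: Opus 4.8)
The plan is to unwind the definition $a\rightarrow b=\big(\Min U(a,b)\big)^b$ in each of the two special cases and check that the set on the right collapses to the announced singleton.

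For part (i), suppose $a\vee b$ exists. Then $U(a,b)=U(a\vee b)=[a\vee b,1]$, so this upper cone has a least element, namely $a\vee b$, and consequently $\Min U(a,b)=\{a\vee b\}$. Applying the superscript operation ${}^b$ to this one-element set (noting $a\vee b\in[b,1]$, so $(a\vee b)^b$ is defined) gives $\big(\Min U(a,b)\big)^b=\{(a\vee b)^b\}$, which is $(a\vee b)^b$ under the paper's convention of identifying singletons with their elements. This recovers formula~(\ref{equ1}) and shows the operator $\rightarrow$ genuinely extends the lattice case.

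For part (ii), suppose $b\leq a$. Then $b\in L(a)$, so $U(a,b)=U(a)=[a,1]$, which has least element $a$; hence $\Min U(a,b)=\{a\}$ and $a\rightarrow b=\{a^b\}=a^b$, again identifying the singleton with its element, and noting $a\in[b,1]$ so that $a^b$ is defined. Observe that (ii) is in fact a special case of (i): when $b\leq a$ the join $a\vee b$ exists and equals $a$, so (ii) follows from (i) as well; I would still state it separately since it is the form used most often in the sequel.

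There is essentially no obstacle here — the only thing to be careful about is the bookkeeping between ``$\Min$ of a set that happens to have a minimum'' and ``that minimum'', and the repeated use of the singleton-identification convention already announced in the text; both are routine. One could also cite Remark~\ref{rem1} in part (ii) to phrase $a^b$ as $\max\{x\in P\mid L(a,x)\cap[b,1]=b\}$, but that is not needed for the proof itself.
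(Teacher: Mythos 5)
Your proof is correct and follows essentially the same route as the paper: unwind the definition, observe that $\Min U(a,b)$ collapses to the singleton $\{a\vee b\}$ (respectively $\{a\}$), and apply ${}^b$. The paper deduces (ii) directly from (i) exactly as you note at the end, so your direct computation for (ii) is just a slightly more verbose version of the same argument.
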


\begin{proof}
\
\begin{enumerate}[(i)]
\item If $a\vee b$ exists in $(P,\leq)$ then
\[
a\rightarrow b=\big(\Min U(a,b)\big)^b=\big(\Min U(a\vee b)\big)^b=(a\vee b)^b.
\]
\item if $b\leq a$ then because of (i) we have $a\rightarrow b=(a\vee b)^b=a^b$.
\end{enumerate}
\end{proof}

In what follows we list some elementary but important properties of this implication. We can see that these are analogous to know properties of implication in classical and non-classical propositional calculus.

\begin{proposition}\label{prop1}
Let $\big(P,\leq,(^y;y\in P),1\big)$ be a finite poset with pseudocomplemented sections and $a,b\in P$. Then the following hold:
\begin{enumerate}[{\rm(i)}]
\item $a\leq b$ if and only if $a\rightarrow b=1$,
\item if $a\vee b$ exists then $(a\vee b)\rightarrow b=a\rightarrow b$,
\item $1\rightarrow a=a$,
\item $a\leq b\rightarrow a$,
\item $a\rightarrow(b\rightarrow a)=1$.
\end{enumerate}
\end{proposition}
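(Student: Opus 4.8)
The plan is to derive all five items directly from the definition $x\rightarrow y=\big(\Min U(x,y)\big)^y$, together with Lemma~\ref{lem1} and two facts about the pseudocomplemented section $([y,1],\leq,{}^y,y)$ read off from Remark~\ref{rem1}: (a) $y^y=1$, since for $x=1$ one has $L(y,1)\cap[y,1]=L(y)\cap[y,1]=\{y\}$, so $1$ lies in the set whose maximum defines $y^y$; and (b) for $m\in[y,1]$, $m^y=1$ implies $m=y$, because then $x=1$ satisfies $L(m,1)\cap[y,1]=\{y\}$, i.e.\ $L(m)\cap[y,1]=\{y\}$, and $m$ itself belongs to this intersection. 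I would also record the trivial observation that every element of $\Min U(x,y)$ lies in $[y,1]$, hence every element of $x\rightarrow y$ lies in $[y,1]$ and is in particular $\geq y$; this already proves (iv).

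For (i): if $a\leq b$ then $U(a,b)=U(b)$, so $\Min U(a,b)=\{b\}$ and $a\rightarrow b=\{b^b\}=\{1\}=1$ by (a). Conversely, if $a\rightarrow b=1$ then $m^b=1$ for every $m\in\Min U(a,b)$, so $m=b$ by (b); since $\Min U(a,b)\neq\emptyset$ this forces $b\in U(a,b)$, i.e.\ $a\leq b$. For (ii): since $a\vee b\geq b$ we get $U(a\vee b,b)=U(a\vee b)=U(a,b)$, and applying $^b$ to the common set of minimal elements yields $(a\vee b)\rightarrow b=a\rightarrow b$ (alternatively, apply Lemma~\ref{lem1}(i) twice). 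For (iii): $U(1,a)=\{1\}$ because $1$ is the top element, so $1\rightarrow a=\{1^a\}$; and $1^a$ is the pseudocomplement of the top element $1$ of the section $[a,1]$, which is its bottom element $a$ (again by Remark~\ref{rem1}, applied with $x$ ranging over $[a,1]$: any such $x$ with $L(1,x)\cap[a,1]=a$ lies in that intersection, so $x=a$). Hence $1\rightarrow a=a$.

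Finally, (v) follows from (iv) and (i): by (iv) every element $c$ of the set $b\rightarrow a$ satisfies $a\leq c$, hence $a\rightarrow c=1$ by (i), and so $a\rightarrow(b\rightarrow a)=1$. The only point that needs care here is not a calculation but a matter of convention: one must fix the meaning of feeding the subset $b\rightarrow a$ into the second slot of $\rightarrow$ so that $a\rightarrow(b\rightarrow a)$ is well defined --- the natural reading being that all values $a\rightarrow c$ with $c\in b\rightarrow a$ equal $1$, equivalently $\bigcup_{c\in b\rightarrow a}(a\rightarrow c)=\{1\}$. I expect this interpretive step, together with the short verification of facts (a) and (b) from Remark~\ref{rem1}, to be the only mild obstacle; the rest is a direct unwinding of the definitions.
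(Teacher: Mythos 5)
Your proof is correct and follows essentially the same route as the paper: the same direct unwinding of $x\rightarrow y=\big(\Min U(x,y)\big)^y$, with (i) resting on the equivalence $m=b\Leftrightarrow m^b=1$ for $m\in[b,1]$, (ii) and (iii) being the same one-line computations, (iv) coming from the fact that all values lie in the section $[a,1]$, and (v) from (iv) together with (i). You are merely more explicit than the paper in verifying $y^y=1$, $1^a=a$, and the set-valued reading of $a\rightarrow(b\rightarrow a)$, all of which the paper leaves implicit.
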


\begin{proof}
\
\begin{enumerate}[(i)]
\item The following are equivalent:
\begin{align*}
                      a & \leq b, \\
            \Min U(a,b) & =b, \\
                      x & =b\text{ for all }x\in\Min U(a,b), \\
                    x^b & =1\text{ for all }x\in\Min U(a,b), \\
\big(\Min U(a,b)\big)^b & =1, \\
         a\rightarrow b & =1,
\end{align*}
\item if $a\vee b$ exists then
\[
(a\vee b)\rightarrow b=\big(\Min U(a\vee b,b)\big)^b=\big(\Min U(a\vee b)\big)^b=\big(\Min U(a,b)\big)^b=a\rightarrow b,
\]
\item
\[
1\rightarrow a=\big(\Min U(1,a)\big)^a=\big(\Min U(1)\big)^a=1^a=a,
\]
\item $a\leq\big(\Min U(b,a)\big)^a=b\rightarrow a$,
\item this follows from (iii) and from (i) of Lemma~\ref{lem1}.
\end{enumerate}
\end{proof}

The next result shows that under appropriate assumptions our unsharp implication satisfies important properties already known from standard implication.

\begin{proposition}
Let $\big(P,\leq,(^y;y\in P),1\big)$ be a finite poset with pseudocomplemented sections and $a,b,c\in P$. Then the following hold:
\begin{enumerate}[{\rm(i)}]
\item If $a\leq b$ and $a\vee c$ exists in $(P,\leq)$ then $b\rightarrow c\leq a\rightarrow c$,
\item if $a\vee b$ exists in $(P,\leq)$ then $a\leq(a\rightarrow b)\rightarrow b$,
\item if $a\vee b$ exists in $(P,\leq)$ then $a\rightarrow b=\big((a\rightarrow b)\rightarrow b\big)\rightarrow b$.
\end{enumerate}
\end{proposition}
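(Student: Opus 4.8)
The three items are linked, so I would prove them in the order (i), (ii), (iii), using the earlier material freely — in particular Lemma~\ref{lem1}(i), which lets me replace $a\to c$ by $(a\vee c)^c$ whenever $a\vee c$ exists, and Proposition~\ref{prop1}.

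For (i): when $a\le b$ and $a\vee c$ exists, I first note $b\vee c$ also exists (it equals $b$ if $c\le b$, and otherwise... actually I should be careful here — $b\vee c$ need not exist in general). So the honest route is: the hypothesis only asserts $a\vee c$ exists, and we want $b\to c\le a\to c$. Write $a\to c=(a\vee c)^c$ via Lemma~\ref{lem1}(i). For the left-hand side, $b\to c=(\Min U(b,c))^c$, a subset of $[c,1]$. I must show every element of $(\Min U(b,c))^c$ lies below $(a\vee c)^c$. Since $a\le b$ gives $U(b,c)\subseteq U(a,c)=U(a\vee c)$, every $m\in\Min U(b,c)$ satisfies $m\ge a\vee c$, i.e. $a\vee c\in[c,1]$ and $m\in[c,1]$ with $a\vee c\le m$. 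Pseudocomplementation in the section $[c,1]$ is order-reversing, so $m^c\le(a\vee c)^c=a\to c$. Hence $b\to c=(\Min U(b,c))^c\le a\to c$, which is exactly the claimed set-inequality $A<B$ in the sense defined in Section~2.

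For (ii): assume $a\vee b$ exists, so $a\to b=(a\vee b)^b=:d$, and $d\in[b,1]$. By Proposition~\ref{prop1}(iv) applied with the roles suitably chosen, $b\le d\to b$, but that is not quite what I want; instead I use that $d\in[b,1]$, so $b\le d$, and then Lemma~\ref{lem1}(ii) gives $d\to b=d^b$. Now in the pseudocomplemented section $[b,1]$ the standard inequality $x\le x^{**}$ holds, i.e. $a\vee b\le (a\vee b)^{bb}=d^b=(a\to b)\to b$. Since $a\le a\vee b$, this yields $a\le(a\to b)\to b$.

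For (iii): again $a\vee b$ exists, $a\to b=(a\vee b)^b=d\in[b,1]$, and by (ii) $d\le d\to b$ where $d\to b=d^b\in[b,1]$; note $d^b\in[b,1]$ too, so $d^b\vee b=d^b$ exists trivially, and Lemma~\ref{lem1}(ii) applies to give $((a\to b)\to b)\to b=(d^b\to b)=(d^b)^b=d^{bb}$. In any pseudocomplemented poset $x^{***}=x^{*}$, so inside $[b,1]$ we get $d^{bb}=((a\vee b)^{bb})^b=(a\vee b)^{bbb}=(a\vee b)^b=d=a\to b$. Hence $a\to b=((a\to b)\to b)\to b$.

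The main obstacle I anticipate is the very first step of (i): I must make sure that the relevant joins (or at least the upper-cone inclusions) are available so that Lemma~\ref{lem1}(i) legitimately applies to the inner implications — the hypothesis is deliberately weak (only $a\vee c$ is assumed, not $b\vee c$), so for the left-hand side one should work directly with $\Min U(b,c)$ rather than with a join, exactly as sketched above, and invoke order-reversal of $^c$ on $[c,1]$ elementwise. Everything else is a routine application of the two pseudocomplement identities $x\le x^{**}$ and $x^{***}=x^{*}$ transported into the sections.
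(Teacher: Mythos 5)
Your proof is correct and follows essentially the same route as the paper: for (i) you expand $b\rightarrow c$ as $\big(\Min U(b,c)\big)^c$ and use $U(b,c)\subseteq U(a,c)=U(a\vee c)$ together with the antitonicity of $^c$ on the section $[c,1]$, and for (ii) and (iii) you transport the standard pseudocomplement identities $x\leq x^{**}$ and $x^{***}=x^{*}$ into $[b,1]$ via Lemma~\ref{lem1}(i)--(ii), exactly as the authors do. Your cautionary remark about not assuming the existence of $b\vee c$ in (i) matches the paper's handling precisely.
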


\begin{proof}
\
\begin{enumerate}[(i)]
\item Since $a\vee c$ exists in $(P,\leq)$, we have $a\rightarrow c=(a\vee c)^c$ according to (i) of Lemma~\ref{lem1}. Now, by (P2), everyone of the following assertions implies the next one:
\begin{align*}
             a & \leq b, \\
        U(b,c) & \subseteq U(a,c), \\
   \Min U(b,c) & \subseteq U(a\vee c), \\
       a\vee c & \leq x\text{ for all }x\in\Min U(b,c), \\
           x^c & \leq(a\vee c)^c=a\rightarrow c\text{ for all }x\in\Min U(b,c), \\
b\rightarrow c & =\big(\Min U(b,c)\big)^c\leq a\rightarrow c
\end{align*}
\item Because of (P3) and (i) and (ii) of Lemma~\ref{lem1} we have
\[
a\leq a\vee b\leq\big((a\vee b)^b\big)^b=(a\rightarrow b)\rightarrow b.
\]
\item Because of (i) of Lemma~\ref{lem1}, (P4) and (ii) of Lemma~\ref{lem1} we have
\[
a\rightarrow b=(a\vee b)^b=\Big(\big((a\vee b)^b\big)^b\Big)^b=\big((a\rightarrow b)\rightarrow b\big)\rightarrow b.
\]
\end{enumerate}
\end{proof}

Let $\mathbf P=(P,\leq)$ be a poset and $a,b\in P$. Recall the following definitions.
\begin{itemize}
\item The greatest element $x$ of $P$ satisfying $L(a,x)\subseteq L(b)$ is called the {\em relative pseudocomplement} $a*b$ of $a$ with respect to $b$. The poset $\mathbf P$ is called {\em relatively pseudocomplemented} if any two elements of $P$ have a relative pseudocomplement, see \cite{CLP} and \cite V.
\item The greatest element $x$ of $P$ satisfying $L(U(a,b),x)=L(b)$ is called the {\em sectional pseudocomplement} $a\circ b$ of $a$ with respect to $b$. The poset $\mathbf P$ is called {\em sectionally pseudocomplemented} if any two elements of $P$ have a sectional pseudocomplement.
\end{itemize}

\begin{remark}
Let $(P,\leq,1)$ be a poset with top element $1$ and $a,b\in P$ with $b\leq a$. Further assume that the sectional pseudocomplement $a\circ b$ of $a$ with respect to $b$ and the pseudocomplement of $a^b$ of $a$ in $[b,1]$ exist. Then $a\circ b\leq a^b$.
\end{remark}

\begin{proof}
Since $b\in L(b)=L\big(U(a,b),a\circ b\big)$, we have $b\leq a\circ b$. Moreover,
\[
L(a,a\circ b)\cap[b,1]=L\big(U(a),a\circ b\big)\cap[b,1]=L\big(U(a,b),a\circ b\big)\cap[b,1]=L(b)\cap[b,1]=\{b\}.
\]
Hence $a\circ b\leq a^b$.
\end{proof}

Let us note that the sectional pseudocomplement is not the same as the pseudocomplement in the corresponding section. For example, consider the poset depicted in Fig.~2. Then $a\notin[b,1]$. Thus the pseudocomplement of $a$ in the section $[b,1]$, i.e.\ $a^b$, does not exist. On the other hand, the sectional pseudocomplement $a\circ b$ of $a$ with respect to $b$ exists and is equal to $b$ because $b$ is the greatest element $x$ satisfying $L\big(U(a,b),x\big)=L(b)$ since $U(a,b)=\{c,d,1\}$. It is worth noticing that $a\circ b$ differs from our unsharp implication $a\rightarrow b$ because $a\rightarrow b=\{c,d\}$.

\begin{example}\label{ex2}
The poset shown in Figure~2:

\vspace*{-2mm}

\begin{center}
\setlength{\unitlength}{7mm}
\begin{picture}(6,10)
\put(3,1){\circle*{.3}}
\put(1,3){\circle*{.3}}
\put(5,3){\circle*{.3}}
\put(1,7){\circle*{.3}}
\put(5,7){\circle*{.3}}
\put(3,9){\circle*{.3}}
\put(1,3){\line(1,-1)2}
\put(1,3){\line(1,1)4}
\put(1,3){\line(0,1)4}
\put(5,3){\line(-1,-1)2}
\put(5,3){\line(-1,1)4}
\put(5,3){\line(0,1)4}
\put(3,9){\line(-1,-1)2}
\put(3,9){\line(1,-1)2}
\put(2.85,.25){$0$}
\put(.3,2.85){$a$}
\put(5.4,2.85){$b$}
\put(.3,6.85){$c$}
\put(5.4,6.85){$d$}
\put(2.85,9.4){$1$}
\put(2.2,-.75){{\rm Fig.~2}}
\end{picture}
\end{center}

\vspace*{4mm}

has pseudocomplemented sections and is simultaneously relatively pseudocomplemented. The tables for $x^y$, $\rightarrow$ and $*$ look as follows:
\[
\begin{array}{c|cccccc}
x^y & 0 & a & b & c & d & 1 \\
\hline
 0  & 1 & - & - & - & - & - \\
 a  & b & 1 & - & - & - & - \\
 b  & a & - & 1 & - & - & - \\
 c  & 0 & d & d & 1 & - & - \\
 d  & 0 & c & c & - & 1 & - \\
 1  & 0 & a & b & c & d & 1
\end{array}
\quad
\begin{array}{c|cccccc}
\rightarrow & 0 &    a    &    b    & c & d & 1 \\
\hline
      0     & 1 &    1    &    1    & 1 & 1 & 1 \\
			a     & b &    1    & \{c,d\} & 1 & 1 & 1 \\
			b     & a & \{c,d\} &    1    & 1 & 1 & 1 \\
			c     & 0 &    d    &    d    & 1 & d & 1 \\
			d     & 0 &    c    &    c    & c & 1 & 1 \\
			1     & 0 &    a    &    b    & c & d & 1
\end{array}
\quad
\begin{array}{c|cccccc}
* & 0 & a & b & c & d & 1 \\
\hline
0 & 1 & 1 & 1 & 1 & 1 & 1 \\
a & b & 1 & b & 1 & 1 & 1 \\
b & a & a & 1 & 1 & 1 & 1 \\
c & 0 & a & b & 1 & d & 1 \\
d & 0 & a & b & c & 1 & 1 \\
1 & 0 & a & b & c & d & 1.
\end{array}
\]
The intuitionistic implication, i.e.\ the relative pseudocomplement $*$ differs from our ``unsharp'' implication $\rightarrow$, e.g.\ $a*b=b$ whereas $a\rightarrow b=\{c,d\}$. Hence, although $a\rightarrow b$ is an ``unsharp'' implication because its result is a two-element subset of $P$, its values $c$ and $d$ are greater than the value of intuitionistic implication $a*b$.
\end{example}

\begin{example}\label{ex1}
The poset shown in Figure~3:

\vspace*{-2mm}

\begin{center}
\setlength{\unitlength}{7mm}
\begin{picture}(6,10)
\put(3,1){\circle*{.3}}
\put(2,2){\circle*{.3}}
\put(1,3){\circle*{.3}}
\put(5,3){\circle*{.3}}
\put(1,7){\circle*{.3}}
\put(5,7){\circle*{.3}}
\put(3,9){\circle*{.3}}
\put(1,3){\line(1,-1)2}
\put(1,3){\line(1,1)4}
\put(1,3){\line(0,1)4}
\put(5,3){\line(-1,-1)2}
\put(5,3){\line(-1,1)4}
\put(5,3){\line(0,1)4}
\put(3,9){\line(-1,-1)2}
\put(3,9){\line(1,-1)2}
\put(2.85,.25){$0$}
\put(1.3,1.85){$a$}
\put(.3,2.85){$b$}
\put(5.4,2.85){$c$}
\put(.3,6.85){$d$}
\put(5.4,6.85){$e$}
\put(2.85,9.4){$1$}
\put(2.2,-.75){{\rm Fig.~3}}
\end{picture}
\end{center}

\vspace*{5mm}

has pseudocomplemented sections, but is not relatively pseudocomplemented since the relative pseudocomplement of $b$ with respect to $a$ does not exist. The tables for $x^y$ and $\rightarrow$ look as follows:
\[
\begin{array}{c|ccccccc}
x^y & 0 & a & b & c & d & e & 1 \\
\hline
 0  & 1 & - & - & - & - & - & - \\
 a  & c & 1 & - & - & - & - & - \\
 b  & c & a & 1 & - & - & - & - \\
 c  & b & - & - & 1 & - & - & - \\
 d  & 0 & a & e & e & 1 & - & - \\
 e  & 0 & a & d & d & - & 1 & - \\
 1  & 0 & a & b & c & d & e & 1.
\end{array}
\quad\quad\quad
\begin{array}{c|ccccccc}
\rightarrow & 0 & a &    b    &    c    & d & e & 1 \\
\hline
      0     & 1 & 1 &    1    &    1    & 1 & 1 & 1 \\
			a     & c & 1 &    1    & \{d,e\} & 1 & 1 & 1 \\
			b     & c & a &    1    & \{d,e\} & 1 & 1 & 1 \\
			c     & b & a & \{d,e\} &    1    & 1 & 1 & 1 \\
			d     & 0 & a &    e    &    e    & 1 & e & 1 \\
			e     & 0 & a &    d    &    d    & d & 1 & 1 \\
			1     & 0 & a &    b    &    c    & d & e & 1.
\end{array}
\]
\end{example}

It is a question if, having an operator $\rightarrow$ on a finite set $A$, it can be converted into a poset with pseudocomplemented sections. For this, we introduce the following structure.

\section{Implication algebras}

Our next goal is to show that this unsharp implication in fact determines the given finite poset with pseudocomplemented sections. For this purpose we define the following concept.

\begin{definition}\label{def1}
A {\em finite {\rm I}-algebra} is an ordered triple $(A,\rightarrow,1)$ with a finite set $A$, an operator $\rightarrow:A^2\rightarrow2^A\setminus\{\emptyset\}$ and $1\in A$ satisfying the following conditions:
\begin{enumerate}
\item[{\rm(I1)}] $x\rightarrow x\approx x\rightarrow1\approx1$,
\item[{\rm(I2)}] $x\rightarrow y=y\rightarrow x=1\Rightarrow x=y$,
\item[{\rm(I3)}] $x\rightarrow y=y\rightarrow z=1\Rightarrow x\rightarrow z=1$,
\item[{\rm(I4)}] $y\rightarrow z=z\rightarrow x=z\rightarrow(x\rightarrow y)=1\Rightarrow z=y$,
\item[{\rm(I5)}] $\big(y\rightarrow x=y\rightarrow u=1\text{ and }(y\rightarrow z=z\rightarrow x=z\rightarrow u=1\Rightarrow z=y)\big)\Rightarrow u\rightarrow(x\rightarrow y)=1$,
\item[{\rm(I6)}] $x\rightarrow y=\{z\rightarrow y\mid x\rightarrow z=y\rightarrow z=1,\text{ and }x\rightarrow u=y\rightarrow u=u\rightarrow z=1\Rightarrow u=z\}$.
\end{enumerate}
\end{definition}

Now we can state and prove the following result.
  
\begin{theorem}\label{th1}
Let $\mathbf P=\big(P,\leq,(^y;y\in P),1\big)$ be a finite poset with pseudocomplemented sections and put $x\rightarrow y:=\big(\min U(x,y)\big)^y$ for all $x,y\in P$. Then $\mathbb I(\mathbf P):=(P,\rightarrow,1)$ is a finite {\rm I}-algebra.
\end{theorem}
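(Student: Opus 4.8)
The plan is to verify the six conditions (I1)--(I6) of Definition~\ref{def1} in turn for $\mathbb I(\mathbf P)=(P,\rightarrow,1)$, where, as in Definition~\ref{def1}, $\min U(x,y)$ is read as the set $\Min U(x,y)$ of minimal elements. The two main tools will be Proposition~\ref{prop1}(i), which states that $a\leq b$ holds if and only if $a\rightarrow b=1$, and Lemma~\ref{lem1}(ii), which states that $a\rightarrow b=a^b$ (a singleton) whenever $b\leq a$; in this last case $a^b$ is, by Remark~\ref{rem1}, the greatest $x\in P$ with $L(a,x)\cap[b,1]=b$. I will also use the convention --- needed already to parse Proposition~\ref{prop1}(v) --- that for a subset $B\subseteq P$ the expression ``$a\rightarrow B=1$'' abbreviates ``$a\rightarrow b=1$ for every $b\in B$'', equivalently $a\leq B$, singletons being identified with their elements as throughout the paper.

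With Proposition~\ref{prop1}(i) in hand, the first three axioms are immediate. (I1): $x\leq x$ and $x\leq 1$ give $x\rightarrow x=1$ and $x\rightarrow 1=1$. (I2): if $x\rightarrow y=y\rightarrow x=1$ then $x\leq y$ and $y\leq x$, so $x=y$ by antisymmetry. (I3): if $x\rightarrow y=y\rightarrow z=1$ then $x\leq y\leq z$, hence $x\rightarrow z=1$ by transitivity.

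For (I4) and (I5) the crucial remark is that the hypotheses force $y\leq x$: in (I4) this comes from $y\leq z\leq x$ (using $y\rightarrow z=z\rightarrow x=1$), and in (I5) directly from $y\rightarrow x=1$. Consequently $U(x,y)=U(x)$, so $x\rightarrow y=\{x^y\}$ is a singleton and the expression $z\rightarrow(x\rightarrow y)$ causes no ambiguity: it simply means $z\rightarrow x^y$. For (I4) I would then note that $z\leq x$, $z\leq x^y$ and $y\leq z$ put $z$ into $L(x,x^y)\cap[y,1]$, which equals $\{y\}$ by Remark~\ref{rem1} (since $x^y$ itself satisfies that defining equation), forcing $z=y$. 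For (I5), the implicational premise says precisely that every $z\in[y,1]$ with $z\leq x$ and $z\leq u$ equals $y$; since $y\leq x$ and $y\leq u$ (the latter from $y\rightarrow u=1$), this is exactly $L(x,u)\cap[y,1]=\{y\}$, so by the maximality in Remark~\ref{rem1} we obtain $u\leq x^y$, i.e.\ $u\rightarrow(x\rightarrow y)=1$.

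For (I6) I would unwind the index condition on the right-hand side: imposing $x\rightarrow z=y\rightarrow z=1$ together with $(x\rightarrow u=y\rightarrow u=u\rightarrow z=1\Rightarrow u=z)$ says exactly that $z\in U(x,y)$ and that no element of $U(x,y)$ is strictly below $z$, i.e.\ $z\in\Min U(x,y)$. For such $z$ we have $y\leq z$, whence $z\rightarrow y=z^y$ by Lemma~\ref{lem1}(ii); therefore the right-hand side equals $\{z^y\mid z\in\Min U(x,y)\}=\big(\Min U(x,y)\big)^y$, which is $x\rightarrow y$ by definition. The only real obstacle I anticipate is conceptual rather than computational: fixing the reading of the nested expressions $z\rightarrow(x\rightarrow y)$ and $\{z\rightarrow y\mid\cdots\}$, and using both directions of the maximality property of $x^y$ in Remark~\ref{rem1} --- namely that, for $w\in[y,1]$, one has $w\leq x^y$ precisely when $L(x,w)\cap[y,1]=\{y\}$ --- in (I4) and (I5). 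Beyond that, everything reduces to reflexivity, antisymmetry and transitivity of $\leq$ together with Proposition~\ref{prop1}(i).
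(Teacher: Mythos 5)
Your proposal is correct and follows essentially the same route as the paper's own proof: (I1)--(I3) from Proposition~\ref{prop1}(i) together with reflexivity, antisymmetry and transitivity; (I4) and (I5) from the equation $L(x,x^y)\cap[y,1]=\{y\}$ and the maximality of $x^y$ in the section $[y,1]$; and (I6) by unwinding the definition of $\rightarrow$. Your treatment of (I6) and of the reading of the nested expressions is in fact slightly more explicit than the paper's one-line dismissal, but the argument is the same.
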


\begin{proof}
Let $a,b\in P$. According to (i) of Proposition~\ref{prop1}, $a\leq b$ if and only if $a\rightarrow b=1$, and according to (ii) of Lemma~\ref{lem1}, $b\leq a$ implies $a\rightarrow b=a^b$. Now (I1) follows since $\leq$ is reflexive and $1$ is the top element of $(P,\leq)$, (I2) and (I3) follow by antisymmetry and transitivity of $\leq$, respectively. Let $x,y,z,u\in P$. If $y\leq z\leq x$ and $z\leq x^y$ then $z\in L(x,x^y)\cap[y,1]=\{y\}$, i.e.\ $z=y$ which shows that (I4) holds. Now for $x,u\in[y,1]$ the following are equivalent:
\begin{align*}
y\rightarrow z=z\rightarrow x=z\rightarrow u=1 & \Rightarrow z=y, \\
                          z\in L(x,u)\cap[y,1] & \Rightarrow z=y, \\
                               L(x,u)\cap[y,1] & \subseteq\{y\}, \\
                               L(x,u)\cap[y,1] & =\{y\}.
\end{align*}
Since for $x,u\in[y,1]$, $L(x,u)\cap[y,1]=\{y\}$ implies $u\leq x^y$, we have (I5). Finally, (I6) follows from the definition of $\rightarrow$.
\end{proof}

However, also the converse of Theorem~\ref{th1} is true, see the following result.

\begin{theorem}\label{th2}
Let $\mathbf A=(A,\rightarrow,1)$ be a finite {\rm I}-algebra and define
\begin{align*}
x\leq y & :\Leftrightarrow x\rightarrow y=1, \\
    x^y & :=x\rightarrow y\text{ whenever }y\leq x
\end{align*}
{\rm(}$x,y\in A${\rm)}. Then $\mathbb P(\mathbf A):=\big(A,\leq,(^y;y\in A),1\big)$ is a finite poset with pseudocomplemented sections.
\end{theorem}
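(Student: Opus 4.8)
The plan is to run the proof of Theorem~\ref{th1} backwards: axioms (I1)--(I3) are exactly what recovers the finite poset with top element, while (I4)--(I6) recover, section by section, the pseudocomplementation.

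First I would verify that $(A,\le)$ is a finite poset with greatest element $1$. Reflexivity of $\le$ is the identity $x\rightarrow x=1$ of (I1); antisymmetry is precisely (I2); transitivity is precisely (I3); and $x\rightarrow1=1$ of (I1) gives $x\le1$ for all $x\in A$, so $1$ is the top element. Finiteness is assumed. Hence for every $y\in A$ the section $[y,1]=\{x\in A\mid y\le x\}$ is a finite poset with least element $y$ and greatest element $1$.

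Next I fix $y\in A$ and $x\in[y,1]$ and show that $x^y:=x\rightarrow y$ is a well-defined element of $[y,1]$ and is the pseudocomplement of $x$ in $[y,1]$, i.e.\ the greatest $v\in[y,1]$ with $L(x,v)\cap[y,1]=\{y\}$. Step (a): $x\rightarrow y\subseteq[y,1]$. Apply (I5) with $u:=y$; its hypotheses hold because $y\le x$, because $\le$ is reflexive, and because the inner implication is forced by antisymmetry; thus $y\rightarrow(x\rightarrow y)=1$, that is, $y\le w$ for all $w\in x\rightarrow y$. Step (b): $x\rightarrow y$ is a singleton. Since $y\le x$ we have $U(x,y)=U(x)$, whose unique minimal element is $x$; so in (I6) applied to the pair $(x,y)$ only $z=x$ satisfies the listed conditions, and since the set-builder on the right-hand side of (I6) must produce a subset of $A$, this forces $x\rightarrow y$ to be a one-element set $\{x^y\}$. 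Step (c): $L(x,x^y)\cap[y,1]=\{y\}$. The inclusion $\supseteq$ follows from step (a) and $y\le x$; for $\subseteq$, any $z$ in this set satisfies $y\rightarrow z=z\rightarrow x=z\rightarrow(x\rightarrow y)=1$, so $z=y$ by (I4). Step (d): maximality. If $v\in[y,1]$ and $L(x,v)\cap[y,1]=\{y\}$, then (I5) with $u:=v$ yields $v\rightarrow(x\rightarrow y)=1$, i.e.\ $v\le x^y$. So $([y,1],\le,{}^y,y)$ is a pseudocomplemented poset for every $y\in A$, and $\mathbb P(\mathbf A)$ is a finite poset with pseudocomplemented sections.

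\textbf{Main obstacle.} The poset part is routine. The step needing genuine care is (b): one must make sure that $x\rightarrow y$ really collapses to a single element whenever $y\le x$, since otherwise ${}^y$ is not a unary operation and $\mathbb P(\mathbf A)$ is not even an object of the required kind. This is precisely the content (I6) is meant to supply --- it says that $x\rightarrow y$ is formed by taking sectional pseudocomplements over the minimal common upper bounds of $x$ and $y$, and for $y\le x$ there is only the one bound $x$ --- but to use it one has to be explicit about the convention of identifying singletons with their elements. Once single-valuedness is in place, steps (c) and (d) are simply the two directions of the relevant equivalences from the proof of Theorem~\ref{th1}, read in reverse, and follow at once from (I4) and (I5).
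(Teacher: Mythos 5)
Your proof is correct and its core follows the paper's own argument exactly: (I1)--(I3) give the finite poset with top element $1$, (I4) gives $L(x,x^y)\cap[y,1]\subseteq\{y\}$, and (I5) (with $u$ an arbitrary element of the section) gives the maximality of $x^y$. Where you diverge is in filling two details the paper passes over. First, your step (a) --- instantiating (I5) with $u:=y$ to get $y\leq x^y$ --- is genuinely needed both for $^y$ to map $[y,1]$ into $[y,1]$ and for the paper's ``$\subseteq$ hence $=$'' step, and the paper leaves it tacit. Second, you invoke (I6) to force $x\rightarrow y$ to be a singleton when $y\leq x$, whereas the paper's own remark after Theorem~\ref{th2} asserts that (I6) is not needed in this proof; the paper instead relies silently on its convention of identifying singletons with their elements when writing $x^y:=x\rightarrow y$. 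Your instinct here is sound: (I4) only yields the joint condition $L(x)\cap L(x\rightarrow y)\cap[y,1]=\{y\}$ and (I5) only bounds elements of the section from below by every member of $x\rightarrow y$, so single-valuedness does not obviously follow from (I4)--(I5) alone, and your (I6) argument (only $z=x$ satisfies the minimality clause when $y\leq x$, and the set-builder must yield a subset of $A$) is a legitimate, if convention-dependent, way to close that gap. In short: same skeleton as the paper, with a more scrupulous treatment of well-definedness that the paper's one-sentence proof glosses over.
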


\begin{proof}
Because of (I1) -- (I3), $(A,\leq,1)$ is a finite poset with top element $1$, because of (I4), $L(x,x^y)\cap[y,1]\subseteq\{y\}$ for all $x,y\in I$ with $y\leq x$ and hence $L(x,x^y)\cap[y,1]=\{y\}$ for all $x,y\in I$ with $y\leq x$, and because of (I5), $y\in A$, $x,u\in[y,1]$ and $L(x,u)\cap[y,1]=\{y\}$ imply $u\leq x^y$. Hence for all $y\in A$, $([y,1],\leq,{}^y,y)$ is a pseudocomplemented poset.
\end{proof}

\begin{remark}
In the above proof, condition {\rm(I6)} of Definition~\ref{def1} is not needed. We need this condition in order to prove that the above described correspondence is one-to-one.
\end{remark}

Now we show that the assignments from Theorems~\ref{th1} and \ref{th2} are mutually inverse.

\begin{theorem}
The correspondence described in Theorems~\ref{th1} and \ref{th2} is one-to-one.
\end{theorem}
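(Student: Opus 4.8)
The plan is to prove that the two assignments are mutually inverse, i.e.\ that $\mathbb P\big(\mathbb I(\mathbf P)\big)=\mathbf P$ for every finite poset with pseudocomplemented sections $\mathbf P$, and $\mathbb I\big(\mathbb P(\mathbf A)\big)=\mathbf A$ for every finite {\rm I}-algebra $\mathbf A$. Since both $\mathbb I$ and $\mathbb P$ keep the underlying set and the element $1$ fixed, in each direction only the derived data have to be matched: the partial order together with the sectional pseudocomplementations in the first case, and the operator $\rightarrow$ in the second.

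For the first composition, let $\mathbf P=\big(P,\leq,(^y;y\in P),1\big)$ and write $\rightarrow$ for the induced operator. Applying $\mathbb P$ to $\mathbb I(\mathbf P)$ produces the order $x\preceq y:\Leftrightarrow x\rightarrow y=1$ and, for $y\preceq x$, the pseudocomplement $x\rightarrow y$. By (i) of Proposition~\ref{prop1}, $x\rightarrow y=1$ holds exactly when $x\leq y$, so $\preceq$ coincides with $\leq$; and by (ii) of Lemma~\ref{lem1}, for $y\leq x$ the value $x\rightarrow y$ equals $x^y$. Hence the structure recovered from $\mathbb I(\mathbf P)$ is $\mathbf P$ itself. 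This direction is merely a collation of facts already established.

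For the second composition, let $\mathbf A=(A,\rightarrow,1)$ be a finite {\rm I}-algebra, let $\mathbb P(\mathbf A)=\big(A,\leq,(^y;y\in A),1\big)$ be the poset delivered by Theorem~\ref{th2}, and let $\Rightarrow$ be the operator obtained by applying $\mathbb I$ to it, so that $x\Rightarrow y=\big(\Min U(x,y)\big)^y$, where $U$ and $\Min$ are computed in $\mathbb P(\mathbf A)$ and $^y$ is the pseudocomplementation in the section $[y,1]$ of $\mathbb P(\mathbf A)$. I would now rewrite every ingredient of this expression using only $\rightarrow$ and the equivalence $x\leq y\Leftrightarrow x\rightarrow y=1$: first, $U(x,y)=\{z\in A\mid x\rightarrow z=y\rightarrow z=1\}$; second, $z\in\Min U(x,y)$ means $x\rightarrow z=y\rightarrow z=1$ together with the implication that $x\rightarrow u=y\rightarrow u=u\rightarrow z=1$ forces $u=z$; third, since every $z\in U(x,y)$ satisfies $y\leq z$, one has $z^y=z\rightarrow y$ directly from the definition of $^y$ in Theorem~\ref{th2}. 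Substituting these into $\big(\Min U(x,y)\big)^y=\{z^y\mid z\in\Min U(x,y)\}$ produces precisely the set displayed on the right-hand side of axiom (I6), whence $x\Rightarrow y=x\rightarrow y$ and therefore $\mathbb I\big(\mathbb P(\mathbf A)\big)=\mathbf A$.

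The main obstacle I expect is exactly this last translation: one must check carefully that unravelling ``$z$ is a minimal common upper bound of $x$ and $y$'' in terms of $\rightarrow$ reproduces the hypotheses of (I6) verbatim, neither more nor less, and that the step $z^y=z\rightarrow y$ is legitimate because membership of $z$ in $U(x,y)$ forces $y\leq z$, so that $z$ really lies in the section $[y,1]$. Once this bookkeeping is in place, (I6) finishes the argument with no further computation; the facts that $\mathbb P(\mathbf A)$ is genuinely a finite poset with pseudocomplemented sections, that $\mathbb I(\mathbf P)$ is genuinely a finite {\rm I}-algebra, and that the carrier and the top element are preserved throughout, are all supplied by Theorems~\ref{th1} and \ref{th2}.
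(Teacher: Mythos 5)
Your proposal is correct and follows essentially the same route as the paper: identify the recovered order and sectional pseudocomplements with the originals via Proposition~\ref{prop1}(i) and Lemma~\ref{lem1}(ii), and then translate $\big(\Min U(x,y)\big)^y$ into the right-hand side of (I6) for the other composition. The extra care you take in checking that $z\in U(x,y)$ forces $y\leq z$ (so that $z^y=z\rightarrow y$ is legitimate) is exactly the point the paper's proof relies on implicitly.
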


\begin{proof}
Let $\mathbf P=\big(P,\leq,(^y;y\in P),1\big)$ be a finite poset with pseudocomplemented sections, put
\begin{align*}
                   \mathbb I(\mathbf P) & =(P,\rightarrow,1), \\
\mathbb P\big(\mathbb I(\mathbf P)\big) & =\big(P,\leq',(_y;y\in P),1\big)
\end{align*}
and let $a,b\in P$. Then because of the definition of $\leq'$ and (i) of Proposition~\ref{prop1} the following are equivalent:
\begin{align*}
             a & \leq'b, \\
a\rightarrow b & =1, \\
             a & \leq b.
\end{align*}
If $b\leq a$ then because of the definition of $a_b$ and (ii) of Lemma~\ref{lem1} we have $a_b=a\rightarrow b=a^b$ This shows $\mathbb P\big(\mathbb I(\mathbf P)\big)=\mathbf P$. Now let $\mathbf A=(A,\rightarrow,1)$ be a finite {\rm I}-algebra, put
\begin{align*}
                   \mathbb P(\mathbf A) & =\big(A,\leq,(^y;y\in I),1\big), \\
\mathbb I\big(\mathbb P(\mathbf A)\big) & =(A,\Rightarrow,1)
\end{align*}
and let $a,b\in A$. Then
\[
a\Rightarrow b=\big(\Min U(a,b)\big)^b=\{x^b\mid a,b\leq x,\text{ and }a,b\leq y\leq x\text{ implies }y=x\}=a\rightarrow b
\]
because of the definition of $\leq$ and (I6). This shows $\mathbb I\big(\mathbb P(\mathbf A)\big)=\mathbf A$.
\end{proof}

In every finite poset $\big(P,\leq,(^y;y\in P),0,1\big)$ with $0$ and pseudocomplemented sections one can define $\neg x:=x\rightarrow0$ for all $x\in P$. Observe that $\neg x=\max\{y\in P\mid L(x,y)=0\}$ for all $x\in P$ and hence $\neg x=x^0$ for all $x\in P$. Due to the fact that $\neg x$ is the pseudocomplementation as defined usually (see e.g.\ \cite{Ba} or \cite V), it satisfies the known properties as follows:
\begin{enumerate}[(P1)]
\item $\neg0=1$ and $\neg1=0$,
\item $x\leq y$ implies $\neg y\leq\neg x$,
\item $x\leq\neg\neg x$,
\item $\neg\neg\neg x=\neg x$.
\end{enumerate}

\begin{remark}
Condition {\rm(P2)} expresses the fact that our negation and implication satisfy the contraposition law, i.e.
\[
\text{if }x\rightarrow y=1\text{ then also }\neg y\rightarrow\neg x=1.
\]
\end{remark}

At the end of this section we show that every bounded pseudocomplemented poset contains a subposet where the unary negation $'$ is a complementation. This is in fact analogous to the Glivenko Theorem (see e.g.\ \cite{Bi}) for pseudocomplemented lattices.

\begin{proposition}
Let $\mathbf P=(P,\leq,{}',0,1)$ be a bounded pseudocomplemented poset. Then $(P',\leq,{}',0,1)$ with $P':=\{x'\mid x\in P\}$ is a complemented poset.
\end{proposition}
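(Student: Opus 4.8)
The plan is to show that for every $a\in P'$ the element $a'$ is a complement of $a$ inside $P'$, i.e.\ that $L(a,a')\cap P'=\{0\}$ and $U(a,a')\cap P'=\{1\}$. First I would record the elementary facts about the pseudocomplementation $'$ on a bounded pseudocomplemented poset: it is antitone ($x\leq y$ implies $y'\leq x'$), it satisfies $x\leq x''$, and hence $x'''=x'$; these all follow directly from the defining maximality property of $x^*$. From $x'''=x'$ one gets that $P'=\{x\in P\mid x=x''\}$ (if $a=x'$ then $a''=x'''=x'=a$, and conversely $a=a''$ exhibits $a$ as a value of $'$). In particular $P'$ is closed under $'$ and contains $0=1'$ and $1=0'$, so $(P',\leq,',0,1)$ is a bounded poset equipped with the unary operation $'$.

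Next I fix $a\in P'$. The lower-cone condition is immediate: by the definition of the pseudocomplement, $L(a,a')=\{0\}$ already in $P$, hence $L(a,a')\cap P'=\{0\}$. For the upper-cone condition, let $c\in P'$ with $a\leq c$ and $a'\leq c$. Antitonicity applied to $a\leq c$ gives $c'\leq a'$, and applied to $a'\leq c$ it gives $c'\leq a''=a$, where I use $a\in P'$. Thus $c'\in L(a,a')=\{0\}$, so $c'=0$ and therefore $c=c''=0'=1$, now using $c\in P'$. Hence $U(a,a')\cap P'=\{1\}$, so $a'$ is a complement of $a$ in $P'$. Since $a\in P'$ was arbitrary and $a'\in P'$, every element of $P'$ has a complement, and $(P',\leq,',0,1)$ is a complemented poset.

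The argument is short and I do not expect a real obstacle; the only point needing care is that the complement must be witnessed \emph{within} $P'$. The upper cone $U(a,a')$ taken in the full poset $P$ may well contain elements other than $1$, and it is precisely the restriction to the closed elements $P'$ — via the identities $a=a''$ and $c=c''$ available there — that collapses this cone to $\{1\}$. Correspondingly, one should not claim that $a'$ is the pseudocomplement of $a$ relative to $P'$, only that it is a complement, which is all that the statement requires.
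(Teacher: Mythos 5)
Your proof is correct and follows essentially the same route as the paper: you use $L(a,a')=\{0\}$ directly from the definition of pseudocomplement, and for any $c\in P'$ above both $a$ and $a'$ you pass to $c'$, land in $L(a',a'')=L(a,a')=\{0\}$, and conclude $c=c''=0'=1$. Your explicit remark that the cone computations must be restricted to $P'$ is a welcome clarification of a point the paper's wording ("This shows $U(a,a')=1$") leaves slightly implicit.
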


\begin{proof}
Clearly, $P'=\{x\in P\mid x''=x\}$. Let $a,b\in P'$. Then $a'\in P'$. Moreover, $L(a,a')=0$. If $b\in U(a,a')$ then $b'\in L(a',a'')=L(a,a')=0$ and hence $b=b''=0'=1$. This shows $U(a,a')=1$, i.e.\ $a'$ is a complement of $a$.
\end{proof}

\begin{example}
If $(P,\leq,{}',0,1)$ is the bounded pseudocomplemented poset of Example~\ref{ex1} then the complemented poset $(P',\leq,{}',0,1)$ is depicted in Figure~4:

\vspace*{-2mm}

\begin{center}
\setlength{\unitlength}{7mm}
\begin{picture}(6,6)
\put(3,1){\circle*{.3}}
\put(1,3){\circle*{.3}}
\put(5,3){\circle*{.3}}
\put(3,5){\circle*{.3}}
\put(1,3){\line(1,-1)2}
\put(1,3){\line(1,1)2}
\put(5,3){\line(-1,-1)2}
\put(5,3){\line(-1,1)2}
\put(2.85,.25){$0$}
\put(.3,2.85){$b$}
\put(5.4,2.85){$c$}
\put(2.85,5.4){$1$}
\put(2.2,-.75){{\rm Fig.~4}}
\end{picture}
\end{center}

\vspace*{4mm}

\end{example}

\section{Adjointness of implication with unsharp conjunction}

It is known that every relatively pseudocomplemented lattice is residuated, in fact it is a ``prototype'' of a residuated lattice where the operation multiplication is considered as the lattice meet and the relative pseudocomplement as a residuum. As mentioned in the introduction, we define sectionally pseudocomplemented lattices in the sake to extend the concept of relative pseudocomplementation to non-distributive lattices. The question concerning residuation in sectionally pseudocomplemented lattices was answered by the authors and J.~K\"uhr (\cite{CKL}) as follows.

A lattice $\mathbf L=(L,\vee,\wedge,\odot,\rightarrow,1)$ with top element $1$ and with two binary operations $\odot$ and $\rightarrow$ is called {\em relatively residuated} if
\begin{enumerate}[{\rm(i)}]
\item $(L,\odot,1)$ is a commutative groupoid with $1$,
\item $x\leq y$ implies $x\odot z\leq y\odot z$,
\item $(x\vee z)\odot(y\vee z)\leq z$ if and only if $x\vee z\leq y\rightarrow z$.
\end{enumerate}
It is worth noticing that the class of relatively residuated lattices forms a variety, see \cite{CKL}. Namely, under condition (i), conditions (ii) and (iii) are equivalent to the identities
\begin{enumerate}
\item[(iv)] $x\odot z\leq(x\vee y)\odot z$,
\item[(v)] $z\vee y\leq x\rightarrow\Big(\big((x\vee y)\odot(z\vee y)\big)\vee y\Big)$,
\item[(vi)] $(x\rightarrow y)\odot(x\vee y)\leq y$.
\end{enumerate}
Unfortunately, we cannot adopt this definition for posets $(P,\leq)$ because we cannot use the lattice operations and, moreover, our implication is not an operation but an operator, i.e.\ its result need not be a singleton. However, we can proceed as follows. Having in mind that $\rightarrow$ is unsharp, we can introduce an unsharp connective conjunction as follows:
\[
x\odot y:=\Max L(x,y)
\]
and for non-singleton subsets $A,B$ of $P$ we define $A\odot B:=\Max L(A,B)$. One can mention that this conjunction reaches the maximal possible values for given entries $x$ and $y$. Moreover, the operator $\odot$ is idempotent since for every $x\in P$ we have
\[
x\odot x=\Max L(x,x)=\Max L(x)=x.
\]

Now we can define the following concept.

\begin{definition}\label{def2}
A poset $\mathbf P=(P,\leq,\odot,\rightarrow,1)$ with top element $1$ and two operators $\odot$ and $\rightarrow$, both mappings from $P^2$ to $2^P$, such that
\begin{enumerate}[{\rm(i)}]
\item $\odot$ is commutative and associative and $x\odot1\approx x$,
\item if $x\leq y$ and $z\in P$ then there exists some $t\in y\odot z$ with $x\odot z\leq t$,
\item $z\in x\odot y$ if and only if $(z\leq x,y$ and $x\leq y\rightarrow z)$
\end{enumerate}
will be called {\em unsharply residuated}. Condition {\rm(iii)} will be called {\em unsharp adjointness}. We call an unsharply residuated poset $\mathbf P$ {\em divisible} if for all $x,y\in P$ with $x\geq y$ we have that $x\rightarrow y$ is a singleton and $\big(x\odot(x\rightarrow y)\big)\cap[y,1]=\{y\}$.
\end{definition}

We are going to show that finite posets with pseudocomplemented sections are unsharply residuated and divisible.

\begin{theorem}\label{th3}
Let $\big(P,\leq,(^y;y\in P),1\big)$ be a finite poset with pseudocomplemented sections and for $x,y\in P$ define
\begin{align*}
      x\odot y & :=\Max L(x,y), \\ 
x\rightarrow y & :=\big(\Min U(x,y)\big)^y.
\end{align*}
Then $(P,\leq,\odot,\rightarrow,1)$ is unsharply residuated and divisible.
\end{theorem}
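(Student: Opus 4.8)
The plan is to verify the three conditions of Definition~\ref{def2} plus divisibility in turn, relying on the already-established properties of $\rightarrow$ and on elementary facts about cones in finite posets.

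\textbf{Condition (i): the monoid structure of $\odot$.} Commutativity is immediate from $L(x,y)=L(y,x)$. For $x\odot1$ note $L(x,1)=L(x)$, so $\Max L(x)=\{x\}$, giving $x\odot1\approx x$. Associativity requires showing $\Max L\big(\Max L(x,y),z\big)=\Max L\big(x,\Max L(y,z)\big)$; the key observation is that $L\big(\Max L(x,y)\big)=L(x,y)$ in a finite poset (every element of $L(x,y)$ lies below some maximal one), hence both sides equal $\Max L(x,y,z)$. This is the kind of routine cone manipulation I would state as a small lemma and not belabour.

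\textbf{Condition (ii): the weak monotonicity of $\odot$.} Suppose $x\leq y$ and $z\in P$. Then $L(x,z)\subseteq L(y,z)$, so any $s\in x\odot z=\Max L(x,z)$ lies in $L(y,z)$ and therefore lies below some maximal element $t$ of $L(y,z)$, i.e.\ $t\in y\odot z$ with $s\leq t$. Since $x\odot z$ is finite and nonempty (it contains a maximal element above, e.g., any common lower bound, and $L(x,z)\ne\emptyset$ as $0$ or at least lower bounds exist — actually one must be careful: $L(x,z)$ always contains its maximal elements and is nonempty only if $x,z$ have a common lower bound; in a finite poset with the elements at hand this is handled since we only need ``for all $t'\in x\odot z$'', and if $x\odot z=\emptyset$ the statement is vacuous). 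I would phrase (ii) as ``for each $s\in x\odot z$ there is $t\in y\odot z$ with $s\leq t$'', which is exactly what the displayed proof of the weaker fact (iv)-analogue needs.

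\textbf{Condition (iii): unsharp adjointness.} This is the heart of the matter and the step I expect to be the main obstacle. I must show $z\in\Max L(x,y)$ iff ($z\leq x$, $z\leq y$, and $x\leq y\rightarrow z$ for every element of the set $y\to z$). The forward direction: if $z\in\Max L(x,y)$ then $z\leq x,y$, and moreover $z\leq y$ gives $y\in[z,1]$ and $y\to z=y^z$ by Lemma~\ref{lem1}(ii); one checks $x\leq y^z$ by showing $L(x,y^z)\cap[z,1]=\{z\}$ using maximality of $z$ in $L(x,y)$ together with Remark~\ref{rem1}. Conversely, from $z\leq x,y$ and $x\leq y^z$ one recovers that $z$ is the largest common lower bound of $x$ and $y$ inside $[z,1]$, which forces $z\in\Max L(x,y)$; here I would invoke that $z\le x\wedge(\text{anything in }[z,1]\cap L(x))$ and the defining property of $y^z$. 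The subtlety is keeping track of whether $y\to z$ is a singleton (it is, once $z\le y$, by Lemma~\ref{lem1}(ii)) so that the quantifier ``$x\le y\to z$'' in condition (iii) is unambiguous, and checking that $z$ being \emph{a} maximal lower bound (not \emph{the} greatest) is exactly captured by the pseudocomplement condition in the section $[z,1]$.

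\textbf{Divisibility.} Finally, for $x\geq y$: Lemma~\ref{lem1}(ii) gives $x\to y=x^y$, a single element, so the singleton requirement holds. For the second clause, $x\odot(x\to y)=\Max L(x,x^y)$; intersecting with $[y,1]$ and using that $L(x,x^y)\cap[y,1]=\{y\}$ (the defining property of the pseudocomplement $x^y$ in the section $[y,1]$), we get $\big(x\odot(x\to y)\big)\cap[y,1]=\{y\}$, as required. I would close by remarking that finiteness is used throughout only to guarantee that lower cones are generated by their maximal elements and that $\Min U(x,y)$ exists.
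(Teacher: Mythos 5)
Your proposal is correct and follows essentially the same route as the paper: the same cone manipulations for (i) and (ii), the same use of Lemma~\ref{lem1}(ii) and the sectional pseudocomplement's defining property for unsharp adjointness, and the same computation for divisibility. Two small points to tighten: in the forward direction of (iii), the condition you should verify is $L(x,y)\cap[z,1]=\{z\}$ (i.e.\ maximality of $z$ in $L(x,y)$), from which $x\leq y^z$ follows via Remark~\ref{rem1} --- the condition $L(x,y^z)\cap[z,1]=\{z\}$ you wrote does not by itself yield $x\leq y^z$; and in the divisibility step the inclusion $\big(\Max L(x,x^y)\big)\cap[y,1]\subseteq L(x,x^y)\cap[y,1]=\{y\}$ gives only ``$\subseteq$'', so you still need the one-line observation that $y\in\Max L(x,x^y)$ (any $c\in L(x,x^y)$ with $y\leq c$ lies in $L(x,x^y)\cap[y,1]=\{y\}$) to obtain equality, exactly as the paper does.
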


\begin{proof}
Let $a,b,c\in P$. Then
\[
a\odot1=\Max L(a,1)=\Max L(a)=a
\]
and, clearly, $\odot$ is commutative. Moreover,
\begin{align*}
(a\odot b)\odot c & =\Max L\big(\Max L(a,b),c\big)=\Max\Big(L\big(\Max L(a,b)\big)\cap L(c)\Big)= \\
& =\Max\big(L(a,b)\cap L(c)\big)=\Max L(a,b,c)=\Max\big(L(a)\cap L(b,c)\big)= \\
& =\Max\Big(L(a)\cap L\big(\Max L(b,c)\big)\Big)=\Max L\big(a,\Max L(b,c)\big)=a\odot(b\odot c).
\end{align*}
Thus $\odot$ satisfies (i) of Definition~\ref{def2}. If $a\leq b$ then
\[
a\circ c=\Max L(a,c)\subseteq L(a,c)\subseteq L(b,c)
\]
and hence there exists some $d\in\Max L(b,c)$ with $a\odot c\leq d$. This shows (ii) of Definition~\ref{def2}. Now unsharp adjointness remains to be proved. Because of Lemma~\ref{lem1} (ii) the following are equivalent:
\begin{align*}
& c\in a\odot b, \\
& c\in\Max L(a,b), \\
& L(a,b)\cap[c,1]=\{c\}, \\
& c\leq a,b\text{ and }a\leq b^c, \\
& c\leq a,b\text{ and }a\leq b\rightarrow c.
\end{align*}
Now assume $a\geq b$. Then $a\rightarrow b=a^b$ and
\[
\big(a\odot(a\rightarrow b)\big)\cap[b,1]=\big(\Max L(a,a^b)\big)\cap[b,1]\subseteq L(a,a^b)\cap[b,1]=\{b\}.
\]
On the other hand, $b\in L(a,a^b)$ and if $b\leq c\in L(a,a^b)$ then $c\in L(a,a^b)\cap[b,1]=\{b\}$, i.e.\ $c=b$. This shows that $b\in\Max L(a,a^b)$ and hence $b\in\big(\Max L(a,a^b)\big)\cap[b,1]$, thus
\[
\big(\Max L(a,a^b)\big)\cap[b,1]=\{b\}.
\]
proving divisibility of $(P,\leq,\odot,\rightarrow,1)$.
\end{proof}

The divisibility has an essential influence on the logic for which the considered unsharply residuated poset is an algebraic semantics. Namely, if we know the truth values of $x$ and $x\rightarrow y$ and we know that $y\leq x$ then the truth value of $y$ is exactly the conjunction of $x$ and $x\rightarrow y$, which is just the derivation rule {\em Modus Ponens}.

If an unsharply residuated poset is a lattice then clearly we have
\[
x\odot y=\Max L(x,y)=\Max L(x\wedge y)=x\wedge y
\]
and the fact that $z\leq x,y$ can be expressed by $x\vee z=x$ and $y\vee z=y$. Then unsharp adjointness can be formulated as follows:
\[
(x\vee z)\odot(y\vee z)=z\text{ if and only if }x\vee z\leq(y\vee z)\rightarrow z.
\]
However, by (ii) of Proposition~\ref{prop1} we know that
\[
(y\vee z)\rightarrow z=y\rightarrow z,
\]
and
\[
(x\vee z)\odot(y\vee z)\geq z
\]
automatically holds. Hence the left-hand side of (iii) is equivalent to $(x\vee z)\odot(y\vee z)\leq z$. Altogether, we obtain
\[
(x\vee z)\odot(y\vee z)\leq z\text{ if and only if }x\vee z\leq y\rightarrow z
\]
which is just relative adjointness as defined in \cite{CKL} and mentioned above. This means that Definition~\ref{def2} is compatible with the corresponding definition for lattices.

\begin{example}
Let us consider the poset from Example~\ref{ex1}. The table for $\odot$ looks as follows:
\[
\begin{array}{c|ccccccc}
\odot & 0 & a & b & c & d & e & 1 \\
\hline
  0   & 0 & 0 & 0 & 0 &    0    &    0    & 0 \\
  a   & 0 & a & a & 0 &    a    &    a    & a \\
	b   & 0 & a & b & 0 &    b    &    b    & b \\
	c   & 0 & 0 & 0 & c &    c    &    c    & c \\
	d   & 0 & a & b & c &    d    & \{b,c\} & d \\
	e   & 0 & a & b & c & \{b,c\} &    e    & e \\
	1   & 0 & a & b & c &    d    &    e    & 1.
\end{array}
\]
We can see that $d\odot e=\{b,c\}$ is not a singleton, and $b\in d\odot e$ implies $b\leq d,e$ and $d\leq d=e\rightarrow b$; also, conversely, $c\leq e,d$ and $e\leq e=d\rightarrow c$ imply $c\in\{b,c\}=e\odot d$.
\end{example}

\section{Conclusion}

We constructed a binary operator on a finite poset with pseudocomplemented sections which can serve as an unsharp implication. It satisfies important properties required for implication in various sorts of propositional logics. Moreover, a negation derived by means of this implication satisfies the properties of implication in intuitionistic logic, thus our poset with this unsharp implication can be recognized as an algebraic semantics of a general case of intuitionistic logic. Moreover, an unsharp conjunction is introduced having similar properties as those satisfied by the connective conjunction in propositional calculus. This unsharp conjunction together with the mentioned unsharp implication forms an adjoint pair. Hence, the logic based on such a poset can be considered as a fairly general kind of substructural logic.

{\bf Declaration of competing interest}

The authors declare that they have no known competing financial interests or personal relationships that could have appeared to influence the work reported in this paper.

Authors' addresses:

Ivan Chajda \\
Palack\'y University Olomouc \\
Faculty of Science \\
Department of Algebra and Geometry \\
17.\ listopadu 12 \\
771 46 Olomouc \\
Czech Republic \\
ivan.chajda@upol.cz

Helmut L\"anger \\
TU Wien \\
Faculty of Mathematics and Geoinformation \\
Institute of Discrete Mathematics and Geometry \\
Wiedner Hauptstra\ss e 8-10 \\
1040 Vienna \\
Austria, and \\
Palack\'y University Olomouc \\
Faculty of Science \\
Department of Algebra and Geometry \\
17.\ listopadu 12 \\
771 46 Olomouc \\
Czech Republic \\
helmut.laenger@tuwien.ac.at
\end{document}